\numberwithin{equation}{section}
\begin{document}
\title{$\shd$-modules and complex foliations}
\author{Hamidou Dathe}
\maketitle

\begin{abstract}
Consider a complex analytic manifold $X$ and  a coherent Lie subalgebra $\shi$ of
the Lie algebra of complex vector fields on $X$. By using a natural
$\shd_X$-module $\shm_\shi$ naturally associated to $\shi$ and the
ring (in the derived sense) $\rhom[\shd_X](\shm_\shi,\shm_\shi)$, we associate
integers which measure the irregularity of the foliation associated
with $\shi$.
\end{abstract}

\section{Introduction}

The idea of using $\shd$-module theory in the study of foliations is very natural and appeared in particular in
\cite{Su90,ESL09}. More precisely, consider a complex manifold $X$ and a coherent Lie subalgebra $\shi$ of the sheaf $\Theta_X$ of tangent vectors. To this ideal is naturally associated associated the coherent left $\shd$-module $\shm_\shi=\shd_X/\shd_X\cdot\shi$ already considered in \cite{Su90}
. The new idea of this paper is to consider the ring (in the derived sense)  $\shd_\shi\eqdot\rhom[\shd_X](\shm_\shi,\shm_\shi)$ and
$\shd_\shi^0\eqdot H^0(\shd_\shi)$.
We denote by $\shd$-$\irr(\shi)$ the increasing sequence consisting of the
 integers $k$ such that $H^k(\shd_\shi)\not=0$ and call it the $\shd$-irregularity 
 of the foliation. These integers $k\in\N$ for which  the cohomology of this ring is not $0$ give invariant which measure, in some sense, the irregularity of the foliation. We call the biggest of these integers the D-irregularity of the foliation.
With some hypotheses, already considered in~\cite{Su90}, we compute the D-irregularity and calculate  $\shd$-$\irr(\shi)$ for some examples of foliation. We also give geometrically interpretation  of $\shd$-$\irr(\shi)$.

\vspace{0.4ex}\noindent
{\bf Acknowledgments}\\
The author  warmly thanks  Pierre Schapira  for proposing him this subject and for his advices during the preparation of the manuscript.

\section{Foliations}
In all this paper, $X$ denotes a complex  manifold
of complex dimension $d_X$. One denotes by
$\sho_X$ the structure sheaf, by $\Theta_X$ the sheaf of holomorphic vector
fields, by  $\Omega^1_X$ the sheaf of holomorphic $1$-forms and by
$\shd_X$ the sheaf of holomorphic differential operators.

\subsubsection*{Vector fields and $1$-forms}
Consider first a locally free $\sho_X$-module of finite rank $\shl$
and set
\eqn
&&\shl^*=\hom[\sho_X](\shl,\sho_X).
\eneqn
One denotes by $\langle\scbul,\scbul\rangle$ the pairing
$(\shl,\shl^*)\to\sho_X$. One uses the same notations when
interwinning $\shl$ and $\shl^*$.

Let  $\shi$ be an $\sho_X$-submodule  of $\shl$. Recall that $\shi$
is coherent if and only if it is locally finitely generated, that is, if
there exists  a locally free $\sho_X$-module of finite rank $\shk$ and
an $\sho_X$-linear map $\psi\cl\shk\to\shl$ such that $\shi=\im\psi$.

One defines the
orthogonal $\shi^\perp$ in $\shl^*$  to $\shi$ by
\eqn
&&\omega\in\shi^\perp\Leftrightarrow\langle\omega,v\rangle=0
\mbox{ for all }v\in\shi.
\eneqn
More precisely, $\shi^\perp$ is the sheaf associated with the presheaf
$U\mapsto \shi(U)^{\perp}$.

\begin{lemma}\label{le:ortho1}
Let  $\shi$ be a coherent $\sho_X$-submodule  of $\shl$.
Then $\shi^\perp$ is coherent and $\shi\isoto \shi^{\perp,\perp}$.
\end{lemma}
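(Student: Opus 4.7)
The plan is to argue locally and reduce to stalk-level statements in the Noetherian local rings $\sho_{X,x}$, using that $\sho_X$ is coherent (Oka) to transport coherence through standard sheaf-theoretic operations. For the coherence of $\shi^\perp$, I would start from a local presentation of $\shi$: since $\shi$ is coherent, there exists locally a morphism $\psi\cl\shk\to\shl$ from a locally free $\sho_X$-module of finite rank with $\im\psi=\shi$. By the adjunction property of the pairing, a section $\omega$ of $\shl^*$ pairs trivially with every element of $\shi$ iff $\psi^*\omega=0$ in $\shk^*$, so $\shi^\perp=\ker(\psi^*\cl\shl^*\to\shk^*)$; since $\shl^*$ and $\shk^*$ are coherent, so is this kernel.

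To construct the natural map $\shi\to\shi^{\perp\perp}$, I would send a local section $v$ of $\shi$ to the form $\omega\mapsto\langle\omega,v\rangle$ on $\shl^*$, which, via the canonical isomorphism $\shl\isoto\shl^{**}$ (using local freeness of $\shl$), identifies with $v$ itself and vanishes on $\shi^\perp$ by the very definition of the latter; hence it factors through $\shi^{\perp\perp}\subseteq\shl$. Injectivity is immediate: the composition $\shi\to\shi^{\perp\perp}\hookrightarrow\shl$ recovers the original inclusion.

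The main obstacle is surjectivity. Working stalkwise with $A=\sho_{X,x}$, $L=\shl_x$, $I=\shi_x$, dualizing the short exact sequence $0\to I\to L\to L/I\to 0$ and combining with the kernel description from the first paragraph identifies $I^\perp$ with $(L/I)^*\subseteq L^*$; a second dualization identifies $I^{\perp\perp}/I$ with the kernel of the canonical map $L/I\to(L/I)^{**}$. The lemma thereby reduces to injectivity of this double-dual map at every stalk. This is the delicate point: it can fail in general (for instance when $L/I$ has nontrivial torsion), so any proof must exploit regularity of $\sho_{X,x}$ together with a Krull-intersection style separation argument, under additional structural hypotheses on $\shi$ that rule out such pathology (e.g.\ local freeness or saturation of $\shi$ in $\shl$, plausibly built into the applications to foliations).
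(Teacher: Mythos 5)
Your analysis is essentially correct, and the ``delicate point'' you flag in your last paragraph is not a weakness of your argument but a genuine defect of the statement itself. The coherence part of your proposal (identifying $\shi^\perp$ with $\ker(\psi^*\cl\shl^*\to\shk^*)$) is exactly the paper's argument. For the double orthogonal, the paper simply asserts the two identities $(\im\psi)^\perp\simeq\ker\psi^*$ and $(\ker\psi^*)^\perp\simeq\im\psi$; the first is correct, but the second is precisely the surjectivity you could not establish, and it fails in general. A concrete counterexample: take $X=\C$ with coordinate $x$, $\shl=\Theta_X$, and $\shi=\sho_X\cdot x\partial_x$, which is a coherent $\sho_X$-submodule (and even a Lie subalgebra). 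Here $\psi\cl\sho_X\to\Theta_X$, $a\mapsto ax\partial_x$, has dual $\psi^*\cl\Omega^1_X\to\sho_X$, $b\,dx\mapsto xb$, so $\shi^\perp=\ker\psi^*=0$ and therefore $\shi^{\perp,\perp}=\Theta_X\neq\shi$. This is exactly your stalkwise picture: $\shi^{\perp,\perp}/\shi$ is the kernel of $\shl/\shi\to(\shl/\shi)^{**}$, which over the Noetherian domain $\sho_{X,x}$ is the torsion submodule of $\shl/\shi$ (here all of $\sho_X/x\sho_X$). So the correct statement is that $\shi^{\perp,\perp}$ is the saturation of $\shi$ in $\shl$, and $\shi\isoto\shi^{\perp,\perp}$ holds if and only if $\shl/\shi$ is torsion-free --- a hypothesis that is neither stated nor automatic (the paper's own example $v=x\partial_x+y\partial_y$ on $\C^2$ happens to satisfy it, but the rank-one example above does not).

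Two remarks to complete your write-up. First, no appeal to regularity or a Krull-intersection argument will rescue the general case: the counterexample lives over $\C\{x\}$, so the only way forward is the added hypothesis you suggest (saturation of $\shi$, equivalently torsion-freeness of $\shl/\shi$), under which your reduction immediately closes the argument. Second, it is worth noting where the lemma is used downstream: in the converse direction of the integrability proposition, the argument produces $[v_1,v_2]\in\shi^{\perp,\perp}$, so without saturation one only concludes that the saturation of $\shi$ is a Lie subalgebra, not $\shi$ itself. Your identification $\shi^\perp\simeq(\shl/\shi)^*$ and the injectivity of $\shi\to\shi^{\perp,\perp}$ are correct as stated; adding the explicit counterexample and the saturation hypothesis would turn your proposal into a complete and correct proof of the (corrected) lemma.
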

\begin{proof}
Consider $\psi\cl\shk\to\shl$ as above such that $\shi=\im\psi$
and denote by $\psi^*\cl\shl^*\to\shk^*$ the dual map.
Then
\eqn
&&(\im\psi)^\perp\simeq \ker\psi^*,\\
&&\ker(\psi^*)^\perp\simeq \im\psi,
\eneqn
 and the result follows since $\mdcoh[\sho_X]$ is abelian.
\end{proof}

In the sequel, we shall apply Lemma~\ref{le:ortho1} to the case where
$\shl=\Theta_X$  and thus $\shl^*=\Omega_X^1$.
Note that  the duality is given by
\eqn
&&\langle\sum_ia_i(x)df_i, v\rangle=\sum_ia_i(x)v(f_i),\quad v\in\Theta_X.
\eneqn
The next result is well-known.
\begin{proposition}
Let $\shi$ be a coherent submodule of $\Theta_X$. Then $\shi$
is a Lie subalgebra of $\Theta_X$ if and only if the coherent
submodule $\shi^\perp$ of $\Omega_X^1$ satisfies $d {\shi}^{\perp}\subset {\shi}^{\perp}\wedge\shi^\perp$.
\end{proposition}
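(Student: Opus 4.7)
The plan is to reduce the statement to Cartan's identity for $1$-forms, combined with the biduality $\shi\isoto\shi^{\perp,\perp}$ furnished by Lemma~\ref{le:ortho1}. Recall that for $\omega\in\Omega^1_X$ and $v_1,v_2\in\Theta_X$ one has
\begin{equation*}
d\omega(v_1,v_2)=v_1(\omega(v_2))-v_2(\omega(v_1))-\omega([v_1,v_2]).
\end{equation*}
As soon as $\omega\in\shi^\perp$ and $v_1,v_2\in\shi$, the first two terms on the right vanish, so the identity collapses to $d\omega(v_1,v_2)=-\omega([v_1,v_2])$. This is the pivot of both implications.

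For the direction $\Rightarrow$, I would start from $\omega\in\shi^\perp$ and $v_1,v_2\in\shi$; the Lie hypothesis gives $[v_1,v_2]\in\shi$, hence $d\omega$ vanishes on every pair drawn from $\shi$. I would then convert this pointwise vanishing into the claimed algebraic containment by working locally. Using a coherent presentation $\psi\cl\shk\to\Theta_X$ with $\shi=\im\psi$ (as in Lemma~\ref{le:ortho1}), dualising to $\psi^*\cl\Omega^1_X\to\shk^*$, and passing to second exterior powers $\Lambda^2\psi^*\cl\Omega^2_X\to\Lambda^2\shk^*$, the vanishing computed above becomes $\Lambda^2\psi^*(d\omega)=0$; identifying this kernel with the image of $\shi^\perp\wedge\shi^\perp$ inside $\Omega^2_X$ gives the desired inclusion.

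For the direction $\Leftarrow$, I would argue purely formally. Any $\alpha\wedge\beta\in\shi^\perp\wedge\shi^\perp$ evaluates to $0$ on any pair $v_1,v_2\in\shi$, so the hypothesis $d\shi^\perp\subset\shi^\perp\wedge\shi^\perp$ forces $d\omega(v_1,v_2)=0$ for every $\omega\in\shi^\perp$ and $v_1,v_2\in\shi$. Cartan's identity then yields $\omega([v_1,v_2])=0$ for all $\omega\in\shi^\perp$, so $[v_1,v_2]\in\shi^{\perp,\perp}=\shi$ by Lemma~\ref{le:ortho1}. Thus $\shi$ is a Lie subalgebra.

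The main obstacle I expect is the algebraic step in the first direction: turning pointwise vanishing of $d\omega$ on $\shi\wedge\shi$ into membership of $d\omega$ in $\shi^\perp\wedge\shi^\perp$. Where $\shi$ is locally free this is transparent by choosing coordinates adapted to the foliation, but at singular points coherence of $\shi$ alone must drive the argument, via the exact sequences obtained by dualising $\psi\cl\shk\to\Theta_X$ twice. The converse, by contrast, is essentially a clean unfolding of the biduality in Lemma~\ref{le:ortho1}.
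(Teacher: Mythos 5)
Your proposal follows the same route as the paper's own proof: the Cartan formula for $d\omega(v_1,v_2)$, whose first two terms vanish when $\omega\in\shi^\perp$ and $v_1,v_2\in\shi$, combined with the biduality $\shi\isoto\shi^{\perp,\perp}$ of Lemma~\ref{le:ortho1}. Your direction $\Leftarrow$ coincides with the paper's argument and is correct.

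In the direction $\Rightarrow$, however, the identification you rely on is not correct as stated. The kernel of $\Lambda^2\psi^*\cl\Omega^2_X\to\Lambda^2\shk^*$ is the annihilator of $\Lambda^2\shi$, i.e.\ the sheaf of $2$-forms vanishing on all pairs of sections of $\shi$, and this is in general strictly larger than the image of $\shi^\perp\wedge\shi^\perp$. Already in the regular model $X=\C^2$ with $\shi=\sho_X\partial_1$ one has $\shi^\perp=\sho_X\,dx_2$, hence $\shi^\perp\wedge\shi^\perp=0$, while $dx_1\wedge dx_2$ annihilates $\Lambda^2\shi$; moreover $d(f\,dx_2)=(\partial_1 f)\,dx_1\wedge dx_2$, so for this (perfectly integrable) $\shi$ the containment $d\shi^\perp\subset\shi^\perp\wedge\shi^\perp$ fails outright. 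What the pointwise vanishing of $d\omega$ on $\shi\times\shi$ actually yields is $d\omega\in\shi^\perp\wedge\Omega^1_X$, the usual Pfaffian form of the Frobenius condition, and with that target both your argument and the statement go through. To be fair, the paper's own proof makes exactly the same one-line leap (``hence $d\omega\in\shi^\perp\wedge\shi^\perp$'') with no justification, so you have not missed anything the paper supplies; but the step you yourself flag as the main obstacle is a genuine one, and the proposed identification does not close it --- it would need to be replaced by the weaker target $\shi^\perp\wedge\Omega^1_X$ (with the statement of the proposition adjusted accordingly) or by an additional argument.
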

\begin{proof}
For $\omega$ a section of $\Omega_X^1$ and $v_1,v_2$ two sections of $\Theta_X$, recall the formula
\eqn
&&d\omega(v_1,v_2)=v_1\cdot \langle\omega,v_2\rangle -v_2\cdot \langle\omega,v_1 \rangle -
\langle\omega,[v_1,v_2]\rangle.
\eneqn
Choosing $\omega$ in $\shi^\perp$, we find that
$d\omega(v_1,v_2)=0$ if and only if $\langle\omega,[v_1,v_2]\rangle=0$. Hence\\
(i) if $\shi$ is a Lie subalgebra, then $d\omega\in {\shi}^{\perp}\wedge\shi^\perp$, \\
and conversely:\\
(ii) if $d {\shi}^{\perp}\subset {\shi}^{\perp}\wedge\shi^\perp$, then $d\omega(v_1,v_2)=0$ for any
$\omega$ in  ${\shi}^{\perp}$, hence $[v_1,v_2]$ belongs to $\shi^{\perp,\perp}\simeq\shi$.
\end{proof}
\begin{example}
Consider a  complex Poisson manifold $X$, that is, a complex manifold $X$ endowed
with a bracket $\{\scbul,\scbul\}\cl\sho_X\times\sho_X\to\sho_X$
satisfying the Jacobi identities. One defines~\cite{ESL09} the Lie
sub-algebra $\shi$ of $\Theta_X$ as the ideal generated by the
derivations $\{f,\scbul\};f\in\sho_X$.
\end{example}

\subsubsection*{Foliations}
We consider now an  $\sho_X$-submodule $\shi$  of $\Theta_X$ and we assume that
\eq\label{hyp:1}
\left\{\parbox{60ex}{
(i) $\shi$ is $\sho_X$-coherent,\\
(ii) $\shi$ is a Lie subalgebra of $\Theta_X$.
}\right.
\eneq

We call $\shi$ a singular foliation of $X$.

For $x\in X$, we denote by $\shi(x)$ the subspace of $T_xX=\Theta_X(x)$
generated by the the germs of sections of $\shi$ at $x$.
\begin{definition}
One sets
\eqn
&&\left\{\parbox{60ex}{
$\rk(\shi)=\sup_{x\in X}\dim\shi(x)$,\\
$\cork(\shi)=\inf_{x\in X}\dim\shi(x)$,\\
$\irr(\shi)=\rk(\shi)-\cork(\shi)$.
}\right.\eneqn
One says that $\rk(\shi)$ is the rank of $\shi$, $\cork(\shi)$  the
corank of $\shi$ and $\irr(\shi)$ the irregulatity of $\shi$.
\end{definition}
We set
\eqn
&&X_j=\{x\in X;\dim\shi(x)=\rk(\shi)-j\}.
\eneqn
Hence, the $X_j's$ are locally closed complex analytic submanifolds
and one gets a stratification
\eqn
&&X=\bigsqcup_{j=0}^{\irr(\shi)}X_j.
\eneqn
Note that $X_0$ is an open dense subset of $X$,  $\shi$ is locally
free on $X_0$ and
$X\setminus X_0$ is a  closed complex analytic subset of
$X$ of codimension at least $1$.

One says that the foliation is regular if $\irr(\shi)=0$, that is,
if $X_0=X$.

\begin{definition}
Let $\Sigma\subset X$ be an embedded submanifold of $X$. One says that
$\Sigma$ is a leaf of $\shi$ if for any $x\in\Sigma$,
$T_x\Sigma=\shi(x)$.
\end{definition}

\begin{remark}
(i) The Frobenius theorem asserts that
each $X_j$ admits  a foliation by complex leaves of dimension $\rk(\shi)-j$.

\noindent
(ii) There is a theorem by Nagano~\cite{Na66} which asserts that $X$ is a unique disjoint
union of connected leaves. This result is false in the $C^\infty$-setting as shown by the following example, due to Nagano.
Let $X=\R^2$ endowed with coordinates $(x,y)$ and consider the two vector fields
on $\R^2$ given by $\partial_x,f(x)\partial_y$ and the left ideal $\shj$ they generate. Assume that $f(x)\not=0$ for $x\not=0$ and $f$ has a zero of infinite order at $x=0$. Then $\R^2=\{x\not=0\}\cup\{x=0\}$ and the set $\{x=0\}$ is not a leaf.

\noindent
(iii) The Camacho-Sad theorem asserts that in dimension $2$, each
point of $X$ belongs to the closure of a leaf of $X_0$.
\end{remark}

\section{Links with $\shd$-modules}
The idea of associating a $\shd$-module to a foliation is not new. See in particular~\cite{Su90} and for a systematic approach in the algebraic case, see~\cite{ESL09}.

We refer to Kashiwara~\cite{Ka03} for $\shd$-module theory.

Let $\shi$ satisfying~\eqref{hyp:1}. We denote by $\tw\shi$ the left
ideal of $\shd_X$ generated by $\shi$, that
is, $\tw\shi=\shd_X\cdot\shi$.
We denote by
$\shm_\shi\eqdot\shd_X/\tw\shi$ the associated $\shd_X$-module.
Since $\shi$ is $\sho_X$-coherent, the ideal $\tw\shi$ is locally
finitely generated, hence coherent in $\shd_X$ and therefore $\shm_\shi$
is coherent.
Such a module has already been considered and studied in~\cite{ESL09}.

Recall that to a coherent $\shd_X$-module $\shm$, one associates its
characteritic variety $\chv(\shm)$, a closed co-isotropic
$\C^*$-conic analytic subset of the cotangent bundle $T^*X$.
Recall a definition of~\cite[p.19]{Ka03}
\begin{definition}
Let $B\eqdot \{P_1,\dots,P_{r}\}$ be a system of generators of a left ideal $\shi$ of $\shd_X$.
One says that $B$ is an involutive  system of generators of $\shi$ if

$\gr B\eqdot \{\sigma(P_1),\dots,\sigma(P_r)\}$ is a system of generators of the graded ideal $\gr \shi$.
\end{definition}
For such an  involutive  system of generators, we evidently have
\eqn
&&\chv(\shd_X/\shi)=\{(x;\xi)\in T^*X; \sigma(P_j)(x;\xi)=0, \mbox{ for all }1\leq j\leq r\}.
\eneqn

We shall consider the hypotheses
\eq\label{hyp:2}
\left\{\parbox{60ex}{
there locally exist $v_1,\dots,v_r\in\shi$ such that
the family $(v_1,\dots,v_r)$ generates $\shi$,  $[v_i,v_j]=0$ for $1\leq i,j\leq r$ and the sequence of symbols
$\{\sigma(v_1),\dots,\sigma(v_r)\}$ is a regular sequence in $\sho_{T^*X}$,
}\right.
\eneq
and
\eq\label{hyp:3}
\left\{\parbox{60ex}{
there locally exist $v_1,\dots,v_r\in\shi$ such that
the family $(v_1,\dots,v_r)$ generates $\shi$,  $[v_i,v_j]=0$ for $1\leq i,j\leq r$ and the sequence $\{v_1,\dots,v_r\}$ is a regular sequence in $\shd_X$.
}\right.
\eneq

Note that the hypothesis~\eqref{hyp:2} has already been considered and studied in~\cite{Su90}.

In order to compare hypotheses~\eqref{hyp:2} and~\eqref{hyp:3}, recall some results from~\cite{Ka03}.
Consider a complex of filtered $\shd_X$-modules
\eq\label{eq:cplD}
&&\shm_1\to\shm_2\to\shm_3
\eneq
and the associated complex of graded modules
\eq\label{eq:cplgrD}
&&\gr\shm_1\to\gr\shm_2\to\gr\shm_3.
\eneq
By~\cite[Prop.A.17]{Ka03}, if \eqref{eq:cplgrD} is exact, then \eqref{eq:cplD} is filtered exact.

\begin{lemma}\label{le:regseq}
Hypothesis~\eqref{hyp:2} implies hypothesis~\eqref{hyp:3}.
\end{lemma}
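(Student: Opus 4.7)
The natural plan is to compare the two hypotheses via Koszul complexes and transfer exactness from the graded (commutative) side to the filtered (non-commutative) side using the cited Proposition A.17 of Kashiwara.

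More precisely, given $v_1,\dots,v_r \in \shi$ with $[v_i,v_j]=0$ and satisfying~\eqref{hyp:2}, I would form the Koszul complex
\eqn
K_\bullet(v_1,\dots,v_r;\shd_X) = \Bigl(\shd_X \otimes_\C \Lambda^\bullet\C^r,\,d\Bigr),
\eneqn
where the differential is given by left multiplication by the $v_i$ (alternating sum over which factor is contracted). The fact that the $v_i$ commute in $\shd_X$ guarantees $d^2=0$. The standard filtration on $\shd_X$ by the order of differential operators induces a filtration on each term of $K_\bullet$ and is compatible with $d$ because $v_i \in \shd_X$ has order $1$ in a controlled way. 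Taking associated graded yields the commutative Koszul complex
\eqn
\gr K_\bullet(v_1,\dots,v_r;\shd_X) \;\simeq\; K_\bullet(\sigma(v_1),\dots,\sigma(v_r);\sho_{T^*X}).
\eneqn

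Hypothesis~\eqref{hyp:2} asserts that $\sigma(v_1),\dots,\sigma(v_r)$ is a regular sequence in $\sho_{T^*X}$; by the classical criterion in the commutative setting, the above graded Koszul complex is exact in positive degrees. Now I would apply the cited Proposition A.17 of~\cite{Ka03} to each three-term window $\gr K_{i+1}\to\gr K_i\to\gr K_{i-1}$: since each such window is exact at the graded level, the corresponding window of $K_\bullet$ is filtered exact. Stringing these conclusions together (or, equivalently, splitting into short exact sequences and iterating), $K_\bullet(v_1,\dots,v_r;\shd_X)$ itself is exact in positive degrees.

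Finally, since the $v_i$ commute in $\shd_X$ and the Koszul differential is given by left multiplication, the vanishing of $H_i\bigl(K_\bullet(v_1,\dots,v_r;\shd_X)\bigr)$ for $i>0$ is exactly the statement that $v_1,\dots,v_r$ is a regular sequence in $\shd_X$ (i.e.\ each $v_{i+1}$ acts without zero-divisors on $\shd_X/(\shd_X v_1 + \cdots + \shd_X v_i)$). This gives~\eqref{hyp:3}. The only delicate point is the non-commutativity of $\shd_X$: it is handled precisely because the $v_i$ commute, so that left multiplication by them defines a genuine Koszul complex whose associated graded coincides with the commutative Koszul complex on symbols; everything else is then transferred via Proposition A.17.
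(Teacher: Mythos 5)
Your overall strategy is the same as the paper's: both arguments rest on the cited Proposition A.17 of Kashiwara, transferring exactness from the commutative Koszul complex on the symbols to the filtered complex over $\shd_X$. The paper applies that principle directly to the two-term complexes $\shd_X/\sum_{j\leq d}\shd_X\cdot v_j\xrightarrow{v_{d+1}}\shd_X/\sum_{j\leq d}\shd_X\cdot v_j$, whereas you route through the full Koszul complex; up to the transfer step your argument is sound (modulo a small orientation slip: to obtain the left module $\shd_X/\sum_j\shd_X\cdot v_j$ as $H_0$ and to match the paper's reading of ``regular sequence,'' the Koszul differential should be \emph{right} multiplication by the $v_i$, and $v_{d+1}$ then acts on the quotient on the right).

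The genuine gap is the last step, where you assert that vanishing of $H_i\bigl(K_\bullet(v_1,\dots,v_r;\shd_X)\bigr)$ for $i>0$ ``is exactly'' the statement that $v_1,\dots,v_r$ is a regular sequence. That equivalence is false in general, already for commutative rings: Koszul homology is insensitive to the order of the elements while regularity is not, and the implication you actually need (Koszul-acyclic $\Rightarrow$ regular) is precisely the direction that fails without a Nakayama-type hypothesis. Concretely, exactness of the single complex built on all $r$ generators does not by itself give injectivity of $v_{d+1}$ on $\shd_X/\sum_{j\leq d}\shd_X\cdot v_j$ for $d<r-1$; the long exact sequence only yields $H_1$ of the shorter complex equal to $v_{d+2}$ times itself, which is not visibly zero. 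The repair stays entirely within your framework: an initial segment $\sigma(v_1),\dots,\sigma(v_{d+1})$ of a regular sequence is again regular, so your graded-to-filtered transfer applies to each $K_\bullet(v_1,\dots,v_{d+1};\shd_X)$ separately; writing this complex as the mapping cone of $v_{d+1}$ acting on $K_\bullet(v_1,\dots,v_d;\shd_X)$, the long exact sequence identifies $\ker\bigl(v_{d+1}\mid_{\shd_X/\sum_{j\leq d}\shd_X\cdot v_j}\bigr)$ with the image of $H_1\bigl(K_\bullet(v_1,\dots,v_{d+1})\bigr)$, which vanishes. With that modification the proof is complete, and it in fact makes explicit a point the paper's own proof leaves implicit, namely the identification of $\gr\bigl(\shd_X/\sum_{j\leq d}\shd_X\cdot v_j\bigr)$ with $\sho_{T^*X}/\sum_{j\leq d}\sho_{T^*X}\cdot\sigma(v_j)$ needed before Proposition A.17 can be invoked.
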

\begin{proof}
For $1\leq d< r$ consider the assertion:
\eq\label{eq:cplD1}
&&v_{d+1}\mbox{ acting on } \shd_X/\sum_{j=1}^d\shd_X\cdot v_j\mbox{ is injective}
\eneq
and  the assertion
\eq\label{eq:cplgrD1}
&&\sigma(v_{d+1})\mbox{ acting on } \sho_{T^*X}/\sum_{j=1}^d\sho_{T^*X}\cdot \sigma(v_j) \mbox{ is injective}.
\eneq
It follows from the results mentioned above that \eqref{eq:cplgrD1} implies \eqref{eq:cplD1}.
\end{proof}
We do not know if  hypotheses~\eqref{hyp:2} and~\eqref{hyp:3} are equivalent.

\begin{proposition}\label{pro:charm}
Let $\shi$ satisfying~\eqref{hyp:1}  and~\eqref{hyp:2} and let $\shm_\shi$ be the
associated coherent $\shd_X$-module. Then
\bnum
\item
$(v_1,\dots,v_r)$ is an involutive  system of generators of $\tw\shi$,
\item
$\chv(\shm_\shi)=\{(x;\xi)\in T^*X; v(x;\xi)=0\mbox{ for all }v\in\shi\}$,
\item
$r=\rk(\shi)=\codim \chv(\shm_\shi)$,
\item
the projective dimension of $\shm_\shi$ is equal to $\rk(\shi)$. In other words,
$\ext[\shd_X]{j}(\shm_\shi,\shd_X)=0$ for $j\not=\rk(\shi)$.
\enum
\end{proposition}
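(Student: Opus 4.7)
The plan is to exploit the commuting sequence $v_1,\dots,v_r$ from hypothesis~\eqref{hyp:2} to build a single Koszul resolution of $\shm_\shi$ over $\shd_X$ and then read off all four assertions from it.

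For (i), I would consider the Koszul complex $K_\bullet(v_1,\dots,v_r;\shd_X)$ associated with left multiplication by the commuting $v_i$, equipped with its natural order filtration. Its associated graded is the commutative Koszul complex $K_\bullet(\sigma(v_1),\dots,\sigma(v_r);\sho_{T^*X})$, which by the symbol-regularity in~\eqref{hyp:2} has homology concentrated in degree $0$, equal to $\sho_{T^*X}/\sum_j\sho_{T^*X}\sigma(v_j)$. By \cite[Prop.~A.17]{Ka03} this graded acyclicity transfers to filtered acyclicity of $K_\bullet(v_1,\dots,v_r;\shd_X)$, whose degree-$0$ homology is $\shm_\shi$; passing back to $\gr$ gives $\gr\shm_\shi\simeq\sho_{T^*X}/\sum_j\sho_{T^*X}\sigma(v_j)$, whence $\gr\tw\shi=\sum_j\sho_{T^*X}\sigma(v_j)$, which is exactly the involutivity statement. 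Part (ii) then follows from the formula for $\chv$ of a module presented by an involutive system of generators (recorded just after the definition of involutivity), combined with the $\sho_X$-generation of $\shi$: any $v=\sum a_jv_j\in\shi$ has $\sigma(v)=\sum a_j\sigma(v_j)$, so joint vanishing of $\sigma(v_1),\dots,\sigma(v_r)$ at $(x;\xi)$ is equivalent to vanishing of $\sigma(v)(x;\xi)$ for every $v\in\shi$.

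For (iii) I would chain three inputs. First, the symbols $\sigma(v_1),\dots,\sigma(v_r)$ form a regular sequence in the regular ring $\sho_{T^*X}$, so their zero locus --- which by (ii) is $\chv(\shm_\shi)$ --- has codimension exactly $r$ in $T^*X$. Second, since $v_1,\dots,v_r$ generate $\shi$ over $\sho_X$ they span $\shi(x)$ at every point, giving $r\geq\rk(\shi)$. Third, over the dense open stratum $X_0$ the fiber of $\chv(\shm_\shi)$ at $x$ is $\shi(x)^\perp\subset T^*_xX$, of codimension $\rk(\shi)$ in $T^*_xX$, so density of $X_0$ in $X$ forces $\codim\chv(\shm_\shi)\leq\rk(\shi)$. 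Together the three inequalities give $r=\rk(\shi)=\codim\chv(\shm_\shi)$.

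For (iv), I invoke Lemma~\ref{le:regseq} to promote~\eqref{hyp:2} to~\eqref{hyp:3}, so that $(v_1,\dots,v_r)$ is a regular sequence for left multiplication in $\shd_X$ and the Koszul complex of (i) is a length-$r$ free resolution of $\shm_\shi$, bounding its projective dimension by $r=\rk(\shi)$. Applying $\hom[\shd_X](-,\shd_X)$ to this free resolution produces a Koszul-type complex whose associated graded is again the commutative Koszul complex on $\sigma(v_1),\dots,\sigma(v_r)$ (now in the dual numbering), hence exact outside the top degree; \cite[Prop.~A.17]{Ka03} applied to this dual complex then yields $\ext[\shd_X]{j}(\shm_\shi,\shd_X)=0$ for $j\neq\rk(\shi)$. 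The step I expect to require the most care is this last one: verifying that the dual complex is genuinely a filtered complex of $\shd_X$-modules whose associated graded is the symbol Koszul complex, so that Kashiwara's filtered-exactness lemma can be reapplied on the dual side. Everything else follows mechanically from the single Koszul resolution produced in (i).
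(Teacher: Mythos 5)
Your proposal is correct and reaches all four assertions, but it differs from the paper's proof in how it sources two key ingredients, and it is worth recording the comparison. For (i)--(ii) the paper simply observes that the common zero locus of the symbols has codimension $r$ and cites \cite[Prop.~2.12]{Ka03}; you instead unpack that citation, running the filtered Koszul complex through \cite[Prop.~A.17]{Ka03} to get $\gr\tw\shi=\sum_j\sho_{T^*X}\sigma(v_j)$ directly --- same mechanism, just made explicit. For (iii) the paper argues geometrically that the involutive variety $\chv(\shm_\shi)$ of codimension $r$ has bicharacteristic leaves of dimension $r$, identified with the rank of $\shi$; your three-inequality dimension count ($\codim=r$ from regularity of the symbol sequence, $r\geq\rk(\shi)$ from $\sho_X$-generation, $\codim\chv(\shm_\shi)\leq\rk(\shi)$ from the fiber over $X_0$) is more elementary and avoids any appeal to the leaf structure. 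For (iv) the upper bound via the Koszul resolution and Lemma~\ref{le:regseq} is exactly the paper's argument; for the vanishing of $\ext[\shd_X]{j}(\shm_\shi,\shd_X)$ below degree $r$ the paper invokes the general theorem \cite[Th.~2.19]{Ka03} (vanishing below $\codim\chv$), whereas you dualize the Koszul resolution and reapply the filtered--graded comparison. Your route is more self-contained, stays consistent with the computation the paper itself performs later in part (B) of Theorem~\ref{th:1} (where $H^r(\shd_\shi)\simeq\shd_X/\sum_j(v_j\cdot\shd_X+\shd_X\cdot v_j)$ is extracted from the same dual Koszul complex), and has the bonus of exhibiting $\ext[\shd_X]{r}(\shm_\shi,\shd_X)$ explicitly; the paper's route is shorter and does not require checking that the dual complex is filtered with the expected associated graded, which, as you note, is the one point needing care. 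One cosmetic slip: to resolve the left module $\shm_\shi=\shd_X/\shd_X\cdot\shi$ the Koszul differentials must be \emph{right} multiplication by the $v_j$ (these are the left $\shd_X$-linear maps), and it is the dual complex that carries left multiplication; your text has the two sides interchanged, but this does not affect the argument.
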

Note that the assertion on $\chv(\shm)$  was already obtained in~\cite[Cor.~4.13]{Su90}.
\begin{proof}
(i) The sequence of symbols being regular, the variety $\bigcap_{j-1}^r\opb{\sigma}(v_j)(0)$ has codimension $r$.
Then apply~\cite[Prop.~2.12]{Ka03}.

\vspace{0.3ex}\noindent
(ii)  follows from (i).

\vspace{0.3ex}\noindent
(iii) It follows from (i) and (ii) that the variety $\chv(\shm)$ has codimension $r$. Hence, the leaves of this involutive manifold have dimension $r$ and this dimension is the rank of $\shi$.

\vspace{0.3ex}\noindent
(iv) Consider the Koszul complex $K^\scbul(\shd_X,\{v_1,\dots,v_r\})$.
The cohomology of this complex is concentrated in degree $r$ by~\eqref{hyp:3} (which follows
from Lemma~\ref{le:regseq}) and is isomorphic to
$\shm_\shi$.
Therefore $\shm_\shi$ admits a projective resolution of length $\leq r$ and
the projective dimension of $\shm_\shi$ is $\leq r$. On the other hand,
$\ext[\shd_X]{j}(\shm_\shi,\shd_X)=0$ for $j>\rk(\shi)$ by ~\cite[Th.~2.19]{Ka03}.
\end{proof}

\begin{definition}
\banum
\item
We set $\shd_\shi\eqdot\rhom[\shd_X](\shm_\shi,\shm_\shi)$ and
$\shd_\shi^0\eqdot H^0(\shd_\shi)$.
\item
We denote by $\shd$-$\irr(\shi)$ the increasing sequence consisting of the
 integers $k$ such that $H^k(\shd_\shi)\not=0$
and we set D-$\irr(\shi)$=$\sup(\shd\text{-}\irr(\shi))$.
\eanum
\end{definition}

\begin{remark}
(i) The sequence $\shd$-$\irr(\shi)$ and the integer D-$\irr(\shi)$ are invariants of the foliation.

\noindent
(ii) Note that $\shd_\shi^0$ is a ring
and the restriction of $\shd_\shi$ to $X_0$ is concentrated
in degree $0$.

\noindent
(iii) A similar ring to $\shd_\shi^0$ has also been constructed ``at hands''
in the real case for smooth foliations in~\cite{PRW10}.

\noindent
(iv) The sheaf $\hom[\shd_X](\shm_\shi,\sho_X )$  is the sheaf of holomorphic functions that are constant along the leaves of $\shi$.
\end{remark}

\begin{theorem}\label{th:1}
Assume~\eqref{hyp:1} and~\eqref{hyp:2}. Then
$\rm D$-$\irr(\shi)=\irr(\shi)$.
\end{theorem}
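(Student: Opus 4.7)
The plan is to reduce, via a local normal form for commuting vector fields, to a computation on a smaller manifold where the foliation vanishes at a single point, and then to establish the two inequalities $\mathrm{D}\text{-}\irr(\shi)\le\irr(\shi)$ and $\mathrm{D}\text{-}\irr(\shi)\ge\irr(\shi)$ separately. I would fix $x\in X_j$, so $\dim\shi(x)=r-j$, and use~\eqref{hyp:2} to pick commuting generators $v_1,\dots,v_r$ of $\shi$ near $x$. After a $\C$-linear change of generators I may assume $v_1(x),\dots,v_{r-j}(x)$ are independent while $v_{r-j+1},\dots,v_r$ vanish at $x$. Frobenius (for commuting fields) gives coordinates $(x_1,\dots,x_{r-j},y_1,\dots,y_m)$ at $x$ with $v_i=\partial_{x_i}$; the remaining $v_{r-j+k}$ commute with every $\partial_{x_i}$, so their coefficients are $x$-independent, and subtracting suitable $\sho$-multiples of the $\partial_{x_i}$ (which leaves $\shi$ unchanged) I may assume each $v_{r-j+k}$ is a vector field $w_k$ on $Y\eqdot\C^m_y$ vanishing at $0$. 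Locally $X=X_1\times Y$, and $\shi$ is the sum of the regular part $(\partial_{x_i})$ on $X_1$ and a foliation $\shj$ on $Y$ generated by the commuting $w_1,\dots,w_j$, which still satisfies~\eqref{hyp:1} and~\eqref{hyp:2}.

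In this model $\shm_\shi\simeq\sho_{X_1}\boxtimes\shm_\shj$; combining the holomorphic Poincar\'e lemma $\rhom[\shd_{X_1}](\sho_{X_1},\sho_{X_1})\simeq\C_{X_1}$ with the K\"unneth formula for coherent $\shd$-modules yields
\eqn
&&\shd_\shi\simeq\C_{X_1}\boxtimes\rhom[\shd_Y](\shm_\shj,\shm_\shj).
\eneqn
For the upper bound, Proposition~\ref{pro:charm}(iv) applied to $\shj$ provides a Koszul resolution of $\shm_\shj$ of length $j$, so the right-hand side is cohomologically concentrated in degrees $[0,j]$. As $j\le\irr(\shi)$, letting $x$ vary gives $H^k(\shd_\shi)=0$ for every $k>\irr(\shi)$, hence $\mathrm{D}\text{-}\irr(\shi)\le\irr(\shi)$.

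For the lower bound I would choose $x_0\in X_{\irr(\shi)}$, so that $j=\irr(\shi)$ and all the $w_k$ vanish at $0\in Y$. The Koszul resolution identifies the top cohomology with the cokernel $\shm_\shj\big/\sum_k w_k\cdot\shm_\shj$, and it suffices to prove that the class of $1$ is non-zero in the stalk at $0$. For this I would introduce the $\C$-linear map $\epsilon\cl\shd_{Y,0}\to\C$ sending $P$ to $(P\cdot 1)(0)$, where $1$ denotes the constant function. Then $\epsilon(Q w_k)=0$ because $w_k\cdot 1=0$, and $\epsilon(w_k P)=(w_k(P\cdot 1))(0)=0$ because the vector field $w_k$ vanishes at $0$. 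Hence $\epsilon$ annihilates $\sum_k(w_k\shd_{Y,0}+\shd_{Y,0}w_k)$ while $\epsilon(1)=1$, so $1$ is not in that subspace, the class of $1$ is non-zero in the top cohomology, and $\mathrm{D}\text{-}\irr(\shi)\ge\irr(\shi)$.

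The step I expect to be the main obstacle is checking that the local normal form preserves hypothesis~\eqref{hyp:2}, namely that the new symbol sequence $\xi_{x_1},\dots,\xi_{x_{r-j}},\sigma(w_1),\dots,\sigma(w_j)$ is still a regular sequence in $\sho_{T^*X}$. This should follow from the observation that the new symbols and the old ones generate the same height-$r$ ideal in the Cohen--Macaulay local ring $\sho_{T^*X}$, and any $r$-element generating set of a height-$r$ ideal in such a ring is a regular sequence. The K\"unneth identification and the augmentation computation are in principle mechanical but also deserve care.
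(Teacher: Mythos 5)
Your proposal is correct and follows essentially the same route as the paper: split off the straightened directions $\partial_{x_1},\dots,\partial_{x_{r-j}}$ to reduce to a factor where the remaining generators vanish at a point, use the Koszul resolution from Proposition~\ref{pro:charm} for the upper bound, and detect the top cohomology class of $1$ by applying operators to the constant function and evaluating at the singular point (your functional $\epsilon$ is the paper's contradiction argument in disguise). The only substantive difference is that you explicitly check that the normal form preserves hypothesis~\eqref{hyp:2} via the Cohen--Macaulay argument, a point the paper leaves implicit; that check is sound and worth recording.
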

\begin{proof}
(A) First we prove the inequality $\rm D$-$\irr(\shi)\leq\irr(\shi)$.

\vspace{0.3ex}\noindent
(A)--(i) Assume first that $\cork(\shi)=0$, that is, $\irr(\shi)=\rk(\shi)$. In this case the result follows from
Proposition~\ref{pro:charm}. Indeed, $\ext[\shd_X]{k}(\shm_\shi,\shn)\simeq0$ for all $k>\rk(\shi)$ and all $\shd_X$-module $\shn$.

\vspace{0.3ex}\noindent
(A)--(ii) Assume $\cork(\shi)>0$. Let $(v_1,\dots,v_r)$ be as in Hypothesis~\ref{hyp:3}. We may choose a local coordinate system $(x_1,\dots,x_n)$ on $X$ such that $v_1=\partial_1$. Since $[v_1,v_j]=0$,  the $v_j$'s do not depend on $x_1$ and of course, we may also assume that they do not depend on $\partial_1$. (If $v_j=w_j+a(x)\partial_1$, we may replace
$v_j$ with $w_j$ keeping the same hypotheses.)

\vspace{0.3ex}\noindent
(A)--(iii) Arguing by induction, we may assume that
$$(v_1,\dots,v_r)=(\partial_1,\dots,\partial_s,v_{s+1},\dots v_r)$$
where $s=\cork(\shi)$ and the $v_j$'s ($j=s+1,\dots,r$) depend neither on $(x_1,\dots,x_s)$ nor on
 $(\partial_1,\dots,\partial_s)$.

 Let $X=X_1\times X_2$ where $X_1=\C^s$ and $X_2=\C^{n-s}$.
 Let $\shi_j$ denote the ideal of $\shd_{X_j}$ generated by
  $(\partial_1,\dots,\partial_s)$ in case $j=1$ and  by $(v_{s+1},\dots,v_r)$  in case $j=2$.
Set
\eqn
&&\tw\shi_j\eqdot\shd_{X_j}\cdot\shi_j,\quad \shm_j=\shd_{X_j}/\tw\shi_j \quad (j=1,2).
\eneqn
Note that $\shm_1\simeq\sho_{X_1}$, the de Rham complex on $X_1$.
Then
\eqn
\rhom[\shd_X](\shm_\shi,\shm_\shi)&\simeq&\rhom[\shd_{X_2}](\shm_2,\rhom[\shd_{X_1}](\shm_1,\shm_\shi)).
\eneqn
Here we write for short $\shd_{X_j}$ instead of $\opb{p_j}\shd_{X_j}$ where $p_j\cl X\to X_j$ is the projection, and similarly with the $\shm_j$'s.
An easy calculation gives
\eqn
&&\rhom[\shd_{X_1}](\shm_1,\shm_\shi)\simeq \shm_2.
\eneqn
Hence, we are reduced to treat $\shm_2$ in which case $\irr(\shi_2)=\rk(\shi_2)$. This completes the proof of (A) since
$\irr(\shi_2)=\irr(\shi)$.

\vspace{0.3ex}\noindent
(B) Let us prove that $H^r(\shd_\shi)\not=0$.
By the same argument as in (A) we may assume that $\cork(\shi)=0$. Since $\{v_1,\dots,v_r\}$ is a regular sequence, an easy calculation gives
\eqn
&&H^r(\shd_\shi)\simeq \shd_X/\sum_{j=1}^r (v_j\cdot\shd_X+\shd_X\cdot v_j).
\eneqn
We may assume that $X=\C^n$ and all $v_j$'s vanish at $0$. Hence, we are reduced to prove that the equation
\eq\label{eq:1=A+B}
&&1=\sum_{j=1}^r (v_j\cdot A_j+B_j\cdot v_j)
\eneq
has no solutions $A_j,B_j\in\shd_X$. Let us argue by contradiction and apply the right-hand side of~\eqref{eq:1=A+B} to the holomorphic function $1$. Since $v_j(1)=0$,  we get:
\eqn
(\sum_{j=1}^r (v_j\cdot A_j+B_j\cdot v_j))(1)=\sum_{j=1}^r (v_j\cdot A_j)(1)=\sum_{j=1}^r (v_j\cdot A_j)(1).
\eneqn
Since $v_j$ vanishes at $0$,  the differential operator $v_j\cdot A_j$ also vanishes at $0$ (meaning that, if one chooses a local coordinate system, all coefficients of this operator will vanish at $0$). Therefore, $(v_j\cdot A_j)(1)=0$.
\end{proof}

 We examine some examples in which we calculate $H^i(\shd_\shi)$ for $1\leqslant i \leqslant r$.

 \begin{example}
Let us particularize Theorem~\ref{th:1} in a simple situation.

Let $X=\C^2$  endowed with coordinates $(x,y)$. Consider the vector field
$v=x\partial_x+y\partial_y$ and let $\shi$ be the Lie subalgebra of
$\Theta_X$ generated
by this vector field.
If $f$ is a section of $\sho_X$, then $v(f)=0$ implies that $f$ is
homogeneous of degree $0$.

We have $X=X_0\sqcup X_1$ where $X_1=\{0\}$ and $\shi$ has rank $1$ on
$X_0$, $0$ on $X_1$.
The leaves of $X_0$ are the complex curves $\{(x,y);x^2+y^2=c\}$ ($c\in\C,c\not=0$).

As already mentioned, the restriction of $\shd_\shi$ to $X_0$ is concentrated
in degree $0$. Let us calculate $H^1(\shd_\shi)$. The module
$\shm_\shi=\shd_X/\shd_X\cdot v$ is represented by the complex
\eqn
&&0\to\shd_X^{-1}  \to[v]\shd_X^0\to 0
\eneqn
in which $\shd_X^{i}=\shd_X$ ($i=0,-1$), $v$ operates on the right and $\shd_X^0$  is in
degree $0$. Therefore, $\rhom[\shd_X](\shm_\shi,\shd_X)$ is
represented by the same complex where now $v$ acts on the left and
$\shd^0_X$ is in degree $1$. It follows that
\eqn
&&H^1(\shd_\shi)\simeq\shd_X/(v\cdot\shd_X+\shd_X\cdot v).
\eneqn
Since $1\notin v\cdot\shd_X+\shd_X\cdot v$, we deduce that $H^1(\shd_\shi)\not=0$.
\end{example}

  \begin{example}
 Assume that
 \eq
\left\{\parbox{60ex}{
(i) $\shi$ is of rank $2$,\\
(ii) $\shi$ is a Lie abelian subalgebra of $\Theta_X$,\\
(iii) $\cork(\shi)=0$.
}\right.
\eneq
By \cite[Rem.~4.3]{Su90} and hypothesis~(ii) there is a generator system $\{v_1, v_2 \}$ of $\shi$ satisfying hypothesis~\eqref{hyp:2}. Therefore, Theorem~\ref{th:1}  implies $H^2(\shd_\shi)\not=0$.
By hypothesis $(iii)$  $\{v_1, v_2 \}$ vanish at $0$ hence $1\notin v_{1}\cdot\shd_X+\shd_X\cdot v_{1}$ and we have also $H^1(\shd_\shi)\not=0$.
\end{example}

{\bf Some questions}
\begin{itemize}
\item
Is it possible to weaken Hypothesis~\ref{hyp:2}?
\item
What is the geometric meaning of the sequence $\shd$-$\irr(\shi)$?
\end{itemize}

\providecommand{\bysame}{\leavevmode\hbox to3em{\hrulefill}\thinspace}

\end{document}